\documentclass[12pt]{article}
\oddsidemargin 0 mm
\topmargin -10 mm
\headheight 0 mm
\headsep 0 mm
\textheight 246.2 mm
\textwidth 159.2 mm
\footskip 9 mm
\setlength{\parindent}{0pt}
\setlength{\parskip}{5pt plus 2pt minus 1pt}
\pagestyle{plain}
\usepackage{amssymb}
\usepackage{amsthm}
\usepackage{amsmath}
\usepackage{graphicx}
\usepackage{enumerate}
\usepackage{pict2e}
\usepackage{framed}

\DeclareMathOperator{\Max}{Max}
\DeclareMathOperator{\Min}{Min}

\newtheorem{theorem}{Theorem}[section]
\newtheorem{definition}[theorem]{Definition}
\newtheorem{lemma}[theorem]{Lemma}
\newtheorem{proposition}[theorem]{Proposition}

\newtheorem{example}[theorem]{Example}
\newtheorem{corollary}[theorem]{Corollary}
\title{Adjointness of generalized Sasaki operations in posets}
\author{Ivan~Chajda and Helmut~L\"anger}
\date{}

\begin{document}
	
\footnotetext{Support of the research of the first author by the Czech Science Foundation (GA\v CR), project 24-14386L, entitled ``Representation of algebraic semantics for substructural logics'', and by IGA, project P\v rF~2024~011, is gratefully acknowledged.}

\maketitle
	
\begin{abstract}
The Sasaki projection and its dual were introduced as a mapping from the lattice of closed subspaces of a Hilbert space onto one of its segments. In a previous paper the authors showed that the Sasaki operations induced by the Sasaki projection and its dual form an adjoint pair in every orthomodular lattice. Later on the authors described large classes of algebras in which Sasaki operations can be defined and form an adjoint pair. The aim of the present paper is to extend these investigations to bounded posets with a unary operation. We introduce the so-called generalized Sasaki projection and its dual as well as the so-called generalized Sasaki operations induced by them. When treating these projections and operations we consider only so-called saturated posets, i.e.\ posets having the property that above any lower bound of two elements there is at least one maximal lower bound and below any upper bound of two elements there is at least one minimal upper bound. We prove that the generalized Sasaki operations are well-defined if and only if the poset in question is orthogonal. We characterize adjointness of the generalized Sasaki operations in different ways and show that adjointness is possible only if the unary operation is a complementation. Finally, we prove that in every saturated orthomodular poset the generalized Sasaki operations form an adjoint pair.
\end{abstract}
	
{\bf AMS Subject Classification:} 06A11, 06C15, 03G12, 81P10
	
{\bf Keywords:} Saturated poset, orthogonal poset, orthomodular poset, complementation, Sasaki projection, generalized Sasaki operations, adjoint pair

\section{Introduction}

It was proved by G.~Birkhoff and J.~von~Neumann \cite{BV} that the lattice $\mathbf L(\mathbf H)=\big(L(\mathbf H),\vee,\cap\big)$ of closed subspaces of a Hilbert space $\mathbf H$ is orthomodular. Independently, the same was proved also by K.~Husimi \cite H. Later on, U. Sasaki \cite S introduced the projection $p_a$ of $L(\mathbf H)$ onto its interval $[0,a]$ as follows:
\[
p_a(x):=(x\vee a')\wedge a.
\]
The name {\em Sasaki projection} was given later by M.~Nakamura \cite N.

The dual projection $\overline{p_a}$ from $L(\mathbf H)$ onto its interval $[a',1]$ given by
\[
\overline{p_a}(x):=\big(p_a(x')\big)'=(x\wedge a)\vee a'
\]
is called the {\em dual Sasaki projection}. We call binary operations defined by terms similar to those occurring in the definition of the (dual) Sasaki projection {\em Sasaki operations}, see \cite{CLa}. Using the (dual) Sasaki projection not necessarily in an orthomodular lattice, but in a lattice $(L,\vee,\wedge,{}')$ with a unary operation $\,'$ we define Sasaki operations $\odot$ and $\to$ as follows:
\begin{align*}
x\odot y & :=(x\vee y')\wedge y, \\
  x\to y & :=x'\vee(x\wedge y).
\end{align*}
We say that $\odot$ and $\to$ form an {\em adjoint pair} if for all $x,y,z\in L$
\[
x\odot y\le z\text{ if and only if }x\le y\to z.
\]
As shown by the authors in \cite{CL18}, \cite{CL23}, these operations form an adjoint pair in every orthomodular lattice. In \cite{CLa} the authors describe lattices with a unary operation for which $\odot$, $\to$ form an adjoint pair.

It was shown later in the logic of quantum mechanics that it cannot be formalized by orthomodular lattices since from physical reasons, the lattice join need not be defined everywhere. In fact, this join is defined for comparable elements and for elements $x$ and $y$ that are {\em orthogonal} to each other, i.e. $x\le y'$ or, equivalently, $y\le x'$, in symbols $x\perp y$. We obtain the following definition.

\begin{definition}\label{def1}
An {\em orthomodular poset} is a bounded poset $(P,\le,{}',0,1)$ with an antitone involution $\,'$ which is a complementation satisfying the following conditions for all $x,y\in P$:
\begin{enumerate}[{\rm(i)}]
\item If $x\le y$ then $x\vee y$ is defined,
\item if $x\le y$ then $y=x\vee(y'\vee x)'$ {\rm(}{\em orthomodularity}{\rm)}.
\end{enumerate}
An orthomodular poset that is also a lattice is called an {\em orthomodular lattice}.
\end{definition}

The aim of the present paper is to generalize the concept of Sasaki operations to bounded posets with a unary operation and determine when these generalized Sasaki operations form an adjoint pair. Because we work with posets where joins and meets need not exist, we must modify the definition of Sasaki operations as well as the concept of an adjoint pair. However, we show that there exists a rather broad class of such posets (which need not be necessarily orthomodular) having the mentioned property. For orthomodular posets, the analogy is very precise and the generalized Sasaki operations really form an adjoint pairs according to the new definition.

\section{Basic concepts}

Due to De Morgan's laws, (ii) of Definition~\ref{def1} can be equivalently rewritten in any of the following forms:
\begin{enumerate}[(i)]
\item If $x\le y$ then $y=x\vee(y\wedge x')$,
\item if $x\le y$ then $x=y\wedge(x\vee y')$.	
\end{enumerate}
There is the question if the concept of Sasaki operations can be generalized to bounded posets $(P,\le,{}',0,1)$ with a unary operation $\,'$ where the expressions $\Min U(a,b')\wedge b$ and $a'\vee\Max L(a,b)$ are defined for all $a,b\in P$.

In the following we often identify singletons with their unique element.

Let $\mathbf P=(P,\le)$ be a poset, $a,b\in P$ and $A,B\subseteq P$. Then $\Max A$ and $\Min A$ denote the set of all maximal and minimal elements of $A$, respectively. Observe that $\Max A$ and $\Min A$ are antichains. We define
\begin{align*}
   A & \le B\text{ if }a\le b\text{ for all }a\in A\text{ and all }b\in B, \\
   A & \le_1B\text{ if for every }a\in A\text{ there exists some }b\in B\text{ with }a\le b, \\
   A & \le_2B\text{ if for every }b\in B\text{ there exists some }a\in A\text{ with }a\le b, \\
L(A) & :=\{x\in P\mid x\le A\}, \\
U(A) & :=\{x\in P\mid A\le x\}.
\end{align*}
Instead of $L(\{a\})$, $L(\{a,b\})$, $L(A\cup\{b\})$, $L(A\cup B)$ and $L\big(U(A)\big)$ we simply write $L(a)$, $L(a,b)$, $L(A,b)$, $L(A,B)$ and $LU(A)$, respectively. Similarly, we proceed in analogous cases. It is evident that if $a\vee b$ is defined in $\mathbf P$ then $\Min U(a,b)=a\vee b$, and if $a\wedge b$ is defined in $\mathbf P$ then $\Max L(a,b)=a\wedge b$. We call $\mathbf P$ {\em saturated} if above any lower bound of two elements of $P$ there is at least one maximal lower bound and below any upper bound of two elements of $P$ there is at least one minimal upper bound. Notice that every finite poset and any lattice is saturated. This condition is weaker than the Ascending Chain Condition. For example, the chain of real numbers $(\mathbb R,\le)$ is saturated but does not satisfy the Ascending Chain Condition. Let $\,'$ be a unary operation on $P$. We call $(P,\le,{}')$ {\em orthogonal} if for all $a,b\in P$ the following holds:
\begin{enumerate}[(i)]
\item If $a\le b$ then $a\vee b'$ is defined,
\item if $a'\le b$ then $a\wedge b$ is defined.
\end{enumerate}
Observe that every orthomodular poset is orthogonal.

Let $(P,\le,{}',0,1)$ be a bounded orthogonal saturated poset and $a\in P$. We define the so-called {\em generalized Sasaki projection} and its dual as the following operators:
\begin{align*}
           p_a(x) & :=\Min U(x,a')\wedge a, \\
\overline{p_a}(x) & :=a'\vee\Max L(a,x)
\end{align*}
for all $x\in P$. We extend these operators from $P$ to $2^P$ by defining
\begin{align*}
           p_a(A) & :=\bigcup_{x\in A}p_a(x), \\
\overline{p_a}(A) & :=\bigcup_{x\in A}\overline{p_a}(x)
\end{align*}
for all $A\subseteq P$.

\begin{theorem}
Let $\mathbf P=(P,\vee,\wedge,{}',0,1)$ be a bounded orthogonal saturated poset, $a\in P$ and $A,B\subseteq P$ with $A\le_2B$. Then
\begin{enumerate}[{\rm(i)}]
\item $p_a(A)\subseteq[0,a]$,
\item $p_a(1)=a$,
\item $p_a(A)\le_2p_a(B)$.
\end{enumerate}
If, moreover, $\mathbf P$ is an orthomodular poset then
\begin{enumerate}
\item[{\rm(iv)}] $p_a(a')=0$ and $p_a(x)=x$ for all $x\in[0,a]$,
\item[{\rm(v)}] $p_a\big(p_a(A)\big)=p_a(A)$.
\end{enumerate}
\end{theorem}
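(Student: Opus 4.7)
My plan is to work directly from the unpacked form $p_a(x)=\{m\wedge a : m\in\Min U(x,a')\}$. This set is well defined because every $m\in\Min U(x,a')$ satisfies $a'\le m$, so orthogonality condition (ii) forces the meet $m\wedge a$ to exist. Parts (i) and (ii) are then essentially immediate: each element of $p_a(A)$ has the shape $m\wedge a$ and therefore lies in $[0,a]$, while for $x=1$ one has $U(1,a')=\{1\}=\Min U(1,a')$ and $1\wedge a=a$, so $p_a(1)$ is the singleton $\{a\}$, identified with $a$.

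For (iii) the crux is to combine $A\le_2 B$ with saturation. I would fix an arbitrary $y\in p_a(B)$, written as $y=m\wedge a$ for some $b\in B$ and some $m\in\Min U(b,a')$. Using $A\le_2 B$, pick $a_0\in A$ with $a_0\le b$; then $m$ is still an upper bound of $\{a_0,a'\}$, and saturation supplies a minimal upper bound $m_0\in\Min U(a_0,a')$ with $m_0\le m$. Orthogonality makes $m_0\wedge a$ defined; by monotonicity of meet $m_0\wedge a\le m\wedge a=y$, and by construction $m_0\wedge a\in p_a(a_0)\subseteq p_a(A)$. Extracting a \emph{minimal} upper bound of $\{a_0,a'\}$ below the given $m$ is the only genuinely nontrivial step in the whole theorem, and it is precisely what saturatedness is designed to provide.

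For the orthomodular case I would lean on the equivalent reformulations of orthomodularity listed at the start of Section~2. If $x\in[0,a]$, orthogonality guarantees that $x\vee a'$ exists as a single element, so $\Min U(x,a')=\{x\vee a'\}$; the rewrite ``if $x\le a$ then $x=a\wedge(x\vee a')$'' then gives $p_a(x)=\{x\}$. The identity $p_a(a')=0$ reduces to $a'\wedge a=0$, which holds because $\,'$ is a complementation. Finally, (v) is a formal consequence of (i) and (iv): every element of $p_a(A)$ lies in $[0,a]$, hence is fixed by $p_a$, so $p_a(p_a(A))=p_a(A)$. Apart from the saturatedness argument in (iii), the proof is essentially definitional bookkeeping, with orthomodularity doing the real work in (iv).
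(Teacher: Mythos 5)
Your proof is correct and follows essentially the same route as the paper: the same definitional verification of (i), (ii), (iv), (v), and in (iii) the same key step of using $A\le_2B$ plus saturatedness to pull a minimal upper bound of $\{a_0,a'\}$ below the given one, with orthogonality guaranteeing the needed meet exists. No gaps.
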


\begin{proof}
\
\begin{enumerate}[(i)]
\item This is clear.
\item We have $p_a(1)=\Min U(1,a')\wedge a=1\wedge a=a$.
\item Let $b\in p_a(B)$. Then there exists some $c\in B$ with $b\in p_a(c)$. This means that there exists some $d\in\Min U(c,a')$ with $d\wedge a=b$. Because of $A\le_2B$ there exists some $e\in A$ with $e\le c$. Now $d\in \Min U(c,a')\subseteq U(c,a')\subseteq U(e,a')$. Since $\mathbf P$ is saturated there exists some $f\in\Min U(e,a')$ with $f\le d$. Because of $a'\le f$ the meet $f\wedge a$ is defined. Moreover, $f\wedge a\in p_a(e)\subseteq p_a(A)$. Together with $f\wedge a\le d\wedge a=b$ this shows $p_a(A)\le_2p_a(B)$.
\item Since $\mathbf P$ is an orthomodular poset we have
\begin{align*}
p_a(a') & =\Min U(a',a')\wedge a=a'\wedge a=0, \\
 p_a(x) & =\Min U(x,a')\wedge a=(x\vee a')\wedge a=x\text{ for all }x\in[0,a].
\end{align*} 
\item Due to (i) we have $p_a(A)\subseteq[0,a]$. Because of (iv) this implies $p_a(x)=x$ for all $x\in p_a(A)$. Altogether, we obtain
\[
p_a\big(p_a(A)\big)=\bigcup_{x\in p_a(A)}p_a(x)=\bigcup_{x\in p_a(A)}\{x\}=p_a(A).
\]
\end{enumerate}
\end{proof}

\section{Generalized Sasaki operations}

Using the generalized Sasaki projection and its dual we define our main concept as follows:

\begin{definition}\label{def2}
Let $(P,\le,{}')$ be a poset with a unary operation $\,'$ having the property that for all $x,y\in P$ the expressions $\Min U(x,y')\wedge y$ and $x'\vee\Max L(x,y)$ are defined. We define the so-called {\em generalized Sasaki operations} $\odot$ and $\to$ as the following operators:
\begin{align*}
x\odot y & :=\Min U(x,y')\wedge y, \\
  x\to y & :=x'\vee\Max L(x,y)
\end{align*}
for all $x,y\in P$.
\end{definition}

\begin{proposition}
Let $\mathbf P=(P,\le,{}',0,1)$ be a bounded saturated poset with a unary operation $'$. Then the generalized Sasaki operations $x\odot y$ and $x\to y$ are defined for all $x,y\in P$ if and only if $\mathbf P$ is orthogonal.
\end{proposition}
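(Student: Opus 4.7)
The proof is an equivalence, so I would split it into the two directions, noting that the assumption of saturation plus boundedness guarantees that $\Min U(x,y')$ and $\Max L(x,y)$ are nonempty (since $1\in U(x,y')$ and $0\in L(x,y)$, and saturation then produces a minimal upper bound, resp.\ maximal lower bound, below, resp.\ above, these witnesses). So the real question is whether the element-wise meets/joins defining $x\odot y$ and $x\to y$ exist.

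For the ``if'' direction, assume $\mathbf P$ is orthogonal. Fix $x,y\in P$. For each $d\in\Min U(x,y')$ we have $y'\le d$, so orthogonality~(ii) (applied with ``$a$'' $:=y$ and ``$b$'' $:=d$, using $y'\le d$) gives that $y\wedge d$ is defined; this makes the expression $\Min U(x,y')\wedge y$ well-defined. Dually, for each $d\in\Max L(x,y)$ we have $d\le x$, so orthogonality~(i) (with ``$a$'' $:=d$ and ``$b$'' $:=x$) gives that $d\vee x'$ is defined, making $x'\vee\Max L(x,y)$ well-defined.

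For the ``only if'' direction, assume the generalized Sasaki operations are defined throughout $P$. To verify orthogonality~(i), suppose $a\le b$. Then $L(b,a)=L(a)$, and since $a$ is the greatest element of $L(a)$, we get $\Max L(b,a)=\{a\}$. Hence $b\to a=b'\vee a$, and the hypothesis that $b\to a$ is defined yields that $a\vee b'$ is defined. Dually, to verify orthogonality~(ii), suppose $a'\le b$. Then $U(b,a')=U(b)$ and $\Min U(b)=\{b\}$, so $b\odot a=b\wedge a$, which is defined by hypothesis.

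I do not expect any serious obstacle: the whole argument is essentially an exercise in unwinding the definitions, the main subtlety being to interpret ``$\Min U(x,y')\wedge y$ is defined'' correctly as the requirement that $d\wedge y$ exists for every $d\in\Min U(x,y')$ (consistent with how $p_a$ is treated set-theoretically in the preceding theorem and its proof), and to note that the two collapses $\Max L(b,a)=\{a\}$ when $a\le b$ and $\Min U(b,a')=\{b\}$ when $a'\le b$ are exactly what extracts the orthogonality conditions out of the hypothesis.
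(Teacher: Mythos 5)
Your proposal is correct and follows essentially the same route as the paper: the ``only if'' direction extracts orthogonality by observing the collapses $\Max L(b,a)=\{a\}$ when $a\le b$ and $\Min U(b,a')=\{b\}$ when $a'\le b$, and the ``if'' direction uses $y'\le d$ for every $d\in\Min U(x,y')$ and $d\le x$ for every $d\in\Max L(x,y)$ together with the two orthogonality conditions, exactly as in the paper. Your extra remark about nonemptiness of $\Min U(x,y')$ and $\Max L(x,y)$ via boundedness and saturation is a harmless (and reasonable) addition.
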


\begin{proof}
Let $a,b\in P$. First assume the generalized Sasaki operations $x\odot y$ and $x\to y$ to be defined for all $x,y\in P$. If $a\le b$ then $b\to a=b'\vee\Max L(b,a)=b'\vee a=a\vee b'$ is defined, and if $a'\le b$ then $b\odot a=\Min U(b,a')\wedge a=b\wedge a=a\wedge b$ is defined. Hence $\mathbf P$ is orthogonal. Conversely, assume $\mathbf P$ to be orthogonal. Since $b'\le\Min U(a,b')$ we have that $b\wedge\Min U(a,b')=\Min U(a,b')\wedge b=a\odot b$ is defined, and since $\Max L(a,b)\le a$ we have that $\Max L(a,b)\vee a'=a'\vee\Max L(a,b)=a\to b$ is defined. This shows that the generalized Sasaki operations $x\odot y$ and $x\to y$ are defined for all $x,y\in P$.
\end{proof}

Hence it makes sense to investigate the generalized Sasaki operations only for bounded orthogonal saturated posets.

\begin{example}\label{ex1}
Let $\mathbf P=(P,\le,{}',0,1)$ denote the complemented poset depicted in Figure~1:

\vspace*{-3mm}

\begin{center}
\setlength{\unitlength}{7mm}
\begin{picture}(7,8)
\put(2,1){\circle*{.3}}
\put(1,3){\circle*{.3}}
\put(3,3){\circle*{.3}}
\put(6,4){\circle*{.3}}
\put(1,5){\circle*{.3}}
\put(3,5){\circle*{.3}}
\put(2,7){\circle*{.3}}
\put(2,1){\line(-1,2)1}
\put(2,1){\line(1,2)1}
\put(2,1){\line(4,3)4}
\put(2,7){\line(-1,-2)1}
\put(2,7){\line(1,-2)1}
\put(2,7){\line(4,-3)4}
\put(1,3){\line(0,1)2}
\put(1,3){\line(1,1)2}
\put(3,3){\line(-1,1)2}
\put(3,3){\line(0,1)2}
\put(1.85,.3){$0$}
\put(.35,2.85){$a$}
\put(.35,4.85){$c$}
\put(3.4,2.85){$b$}
\put(3.4,4.85){$d$}
\put(1.85,7.4){$1$}
\put(6.4,3.85){$e$}
\put(2.7,-.75){{\rm Fig.~1}}
\put(.4,-1.75){{\rm Complemented poset $\mathbf P$}}
\end{picture}
\end{center}

\vspace*{10mm}

where the complementation $\,'$ is defined by the following table:
\[
\begin{array}{l|ccccccc}
x  & 0 & a & b & c & d & e & 1 \\
\hline
x' & 1 & e & e & e & e & c & 0
\end{array}
\]
That $\mathbf P$ is orthogonal can be seen as follows. Assume $x,y\in P$ and $x\le y$ and $x\vee y'$ not to be defined. Then $x,y'\in\{a,b\}$ which is impossible since $P'=\{0,c,e,1\}$. Now assume $x,y\in P$ and $x'\le y$ and $x\wedge y$ is not to be defined. Then $x,y\in\{c,d\}$ and hence $x'=e$ contradicting $x'\le y$. The operation tables of the generalized Sasaki operations look as follows:
\[
\begin{array}{c|ccccccc}
\odot & 0 & a & b & c & d & e & 1 \\
\hline
  0   & 0 & 0 & 0 & 0 & 0 & 0 & 0 \\
  a   & 0 & a & b & c & d & 0 & a \\
  b   & 0 & a & b & c & d & 0 & b \\
  c   & 0 & a & b & c & d & 0 & c \\
  d   & 0 & a & b & c & d & e & d \\
  e   & 0 & 0 & 0 & 0 & 0 & e & e \\
  1   & 0 & a & b & c & d & e & 1
\end{array}
\quad\quad\quad
\begin{array}{c|ccccccc}
\to & 0 & a & b & c & d & e & 1 \\
\hline
 0  & 1 & 1 & 1 & 1 & 1 & 1 & 1 \\
 a  & e & 1 & e & 1 & 1 & e & 1 \\
 b  & e & e & 1 & 1 & 1 & e & 1 \\
 c  & e & 1 & 1 & 1 & 1 & e & 1 \\
 d  & e & 1 & 1 & 1 & 1 & e & 1 \\
 e  & c & c & c & c & c & 1 & 1 \\
 1  & 0 & a & b & c & d & e & 1
\end{array}
\] 
\end{example}

The next lemma shows some basic properties of generalized Sasaki operations.

\begin{lemma}\label{lem1}
Let $\mathbf P=(P,\le,{}',0,1)$ be a bounded poset with a unary operation $'$. Then $\mathbf P$ satisfies the following identities:
\begin{enumerate}[{\rm(i)}]
\item $1\odot x\approx x$,
\item $0\to x\approx0'$,
\item $x\to0\approx x'$.
\end{enumerate}
If, moreover, $a\in P$ and $a\odot0$ is defined then $a\odot0=0$.
\end{lemma}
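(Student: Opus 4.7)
The plan is to verify all four claims by direct computation from Definition~\ref{def2}, exploiting the fact that in a bounded poset the sets $U(x,1)$ and $L(x,0)$ collapse to singletons and that $1\wedge x=x$, $0'\vee 0=0'$, $x'\vee 0=x'$, $1\wedge 0=0$ are automatic.

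For (i), I would compute $U(1,x')=\{1\}$, so $\Min U(1,x')=1$, hence $1\odot x=1\wedge x=x$. For (ii), since $L(0,x)=\{0\}$, we get $\Max L(0,x)=0$ and therefore $0\to x=0'\vee 0=0'$. Statement (iii) is analogous: $L(x,0)=\{0\}$ yields $\Max L(x,0)=0$, whence $x\to 0=x'\vee 0=x'$.

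For the final claim, assume $a\odot 0$ is defined. Since $0'=1$ is the top element, $U(a,0')=U(a,1)=\{1\}$, so $\Min U(a,0')=1$ and $a\odot 0=1\wedge 0=0$.

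The main (and only) thing to be careful about is matching the set-theoretic definition of the generalized Sasaki operations against the informal identifications of singletons with their elements introduced earlier in the paper; beyond this bookkeeping there is no real obstacle, as the arguments are one-line evaluations in any bounded poset, requiring neither orthogonality, saturation, nor any property of the unary operation $\,'$ beyond being defined.
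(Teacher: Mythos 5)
Parts (i)--(iii) of your proposal are correct and coincide with the paper's own one-line evaluations: $U(1,x')=\{1\}$ gives $1\odot x=1\wedge x=x$, and $L(0,x)=L(x,0)=\{0\}$ gives $0\to x=0'\vee0=0'$ and $x\to0=x'\vee0=x'$.

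Your argument for the final claim, however, contains a genuine error: you assert that $0'=1$, but the lemma assumes only that $\,'$ is an arbitrary unary operation on a bounded poset, with no requirement that it be a complementation, antitone, or send $0$ to $1$. Indeed, in the paper's own $\mathbf M_3$ example the operation has $0'=0$, so the premise $U(a,0')=U(a,1)=\{1\}$ on which your computation rests is simply false in general (and note that your own items (ii) and (iii) correctly left $0'$ unevaluated). The intended argument, as in the paper, does not need to identify $\Min U(a,0')$ at all: since $z\wedge0=0$ for every $z\in P$, whatever the set $\Min U(a,0')$ is, one gets $a\odot0=\Min U(a,0')\wedge0=0$ as soon as this expression is defined. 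So the conclusion is correct and the repair is immediate, but the step as you wrote it does not follow from the hypotheses.
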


\begin{proof}
\
\begin{enumerate}[(i)]
\item $1\odot x\approx\Min U(1,x')\wedge x\approx1\wedge x\approx x$,
\item $0\to x\approx0'\vee\Max L(0,x)\approx0'\vee0\approx0'$,
\item $x\to0\approx x'\vee\Max L(x,0)\approx x'\vee0\approx x'$.
\end{enumerate}
If, moreover, $a\in P$ and $a\odot0$ is defined then $a\odot0\approx\Min U(a.0')\wedge0\approx0$.
\end{proof}

\section{Adjoint pairs of generalized Sasaki operations}

Let $(P,\le,{}',0,1)$ be a bounded orthogonal saturated poset. We are interested in the case when the generalized Sasaki operations form an adjoint pair. However, contrary to the case of orthomodular lattices mentioned in the introduction, now $x\odot y$ and $x\to y$ may be subsets of $P$ and hence we must modify adjointness as follows: We say that the generalized Sasaki operations form an {\em adjoint pair} if both (A1) and (A2) are satisfied:
\begin{enumerate}
\item[(A1)] $x,y,z\in P$ and $x\odot y\le_2z$ imply $x\le_1y\to z$,
\item[(A2)] $x,y,z\in P$ and $x\le_1y\to z$ imply $x\odot y\le_2z$.
\end{enumerate}

It is elementary to prove the following result.

\begin{lemma}
Let $(P,\le,{}',0,1)$ be a bounded orthogonal saturated poset and let $x,y\in P$. Then the following holds:
\begin{enumerate}[{\rm(i)}]
\item {\rm(A1)} implies $x\vee x'\approx1$,
\item {\rm(A2)} implies $x\wedge x'\approx0$,
\item if the generalized Sasaki operations form an adjoint pair then $\,'$ is a complementation,
\item {\rm(A2)} implies that $x\to y=1$ if and only if $x\le y$.
\end{enumerate}	
\end{lemma}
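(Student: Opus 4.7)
The strategy is to instantiate (A1) or (A2) at carefully chosen triples and simplify using Lemma~\ref{lem1} together with orthogonality.

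For (i) I plan to start from $1\odot x=x$ (Lemma~\ref{lem1}(i)), which trivially yields $1\odot x\le_2 x$; applying (A1) at $(1,x,x)$ then gives $1\le_1 x\to x$. Since $x\to x=x'\vee\Max L(x,x)=x'\vee x=\Min U(x,x')$, some element of $\Min U(x,x')$ lies above $1$, forcing that element to be $1$ itself. As $1$ is the top of $P$, no strictly smaller upper bound of $\{x,x'\}$ can exist, so $\Min U(x,x')=\{1\}$.

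For (ii) I will run the dual argument: use $x\to 0=x'$ (Lemma~\ref{lem1}(iii)), so $x'\le_1 x\to 0$ holds trivially, and (A2) gives $x'\odot x\le_2 0$. Orthogonality (via $x'\le x'$) makes $x\wedge x'$ a single element and $x'\odot x=\{x\wedge x'\}$, so the $\le_2 0$ condition collapses to $x\wedge x'=0$. Part (iii) is then immediate: adjointness packages both (A1) and (A2), so (i) and (ii) together supply $x\vee x'=1$ and $x\wedge x'=0$, which is by definition that $\,'$ is a complementation.

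For (iv) the forward direction is a direct instance of (A2) at $(1,x,y)$: reading ``$x\to y=1$'' as $1\in x\to y$, we have $1\le_1 x\to y$, so (A2) yields $1\odot x=x\le_2 y$, i.e.\ $x\le y$. For the backward direction, $x\le y$ makes $\Max L(x,y)=\{x\}$, whence $x\to y=x'\vee x=\Min U(x,x')$, and I then identify this set with $1$. The hardest part will be this last identification: pinning down $\Min U(x,x')=\{1\}$ in the singleton sense is the conclusion of (i), which used (A1), not (A2). I plan to circumvent this by adopting the mild reading ``$x\to y=1$'' $\Leftrightarrow$ $1\in x\to y$ and arguing by contradiction that a putative minimal upper bound $b\in\Min U(x,x')$ with $b<1$ can be ruled out by instantiating (A2) at a triple involving $b$ and invoking $x\wedge x'=0$ from (ii).
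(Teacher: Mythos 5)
Parts (i)--(iii) and the forward half of (iv) are correct and essentially the paper's own argument; your instantiations differ only trivially (the paper applies (A1) at $(1,x,1)$ and (A2) at $(0,x,0)$, you use $(1,x,x)$ and $(x',x,0)$), and the conclusions $x\vee x'=1$, $x\wedge x'=0$, and ``$x\to y=1\Rightarrow x\le y$'' come out the same way.

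The genuine gap is the backward half of (iv). Your plan --- to force $\Min U(x,x')=\{1\}$ by a contradiction argument using only (A2) together with $x\wedge x'=0$ from (ii) --- cannot be carried out, because (A2) plus $x\wedge x'\approx0$ does not imply $x\vee x'\approx1$. Concretely, take the three-element chain $0<a<1$ with $0'=1$, $a'=0$, $1'=0$: this is a bounded orthogonal saturated poset (a finite lattice), and here $x\odot0=0$, $x\odot a=x\wedge a$, $x\odot1=x$, while $0\to z=1$, $a\to z=a\wedge z$, $1\to z=z$, from which one checks directly that (A2) holds (equivalently, condition (vi) of Theorem~\ref{th1} holds); moreover $x\wedge x'=0$ for every $x$. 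Nevertheless $a\le1$ while $a\to1=a'\vee\Max L(a,1)=0\vee a=a\ne1$, and $1\notin a\to1$ even under your weaker reading. So no instantiation of (A2) can rule out a minimal upper bound $b<1$ of $\{x,x'\}$; the converse direction genuinely needs $x\vee x'=1$, i.e.\ item (i), which rests on (A1). Note that the paper's own proof uses exactly this step (it writes $x'\vee x=1$ without comment), so the converse implication should be proved with (A1) (equivalently, adjointness, so that $'$ is a complementation) available; with that in hand your computation $\Max L(x,y)=\{x\}$, hence $x\to y=x\vee x'=1$, goes through.
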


\begin{proof}
\
\begin{enumerate}[(i)]
\item Because of $1\odot x=x\le_21$ we have $1\le_1x\to1=x'\vee\Max L(x,1)=x'\vee x$.
\item Because of $0\le_1x\to0$ we have $x'\wedge x=\Min U(0,x')\wedge x=0\odot x\le_20$.
\item This follows from (i) and (ii).
\item If $x\to y=1$ then $1\le_1x\to y$ and hence according to Lemma~\ref{lem1} and (A2) we have $x=1\odot x\le_2y$, that means, $x\le y$. If, conversely, $x\le y$ then $x\to y=x'\vee\Max L(x,y)=x'\vee x=1$.
\end{enumerate}
\end{proof}

Hence, studying adjointness of generalized Sasaki operations in bounded orthogonal saturated posets $(P,\le,{}',0,1)$ makes sense only if $\,'$ is a complementation.

Now we can state the following theorem.

\begin{theorem}\label{th1}
Let $\mathbf P=(P,\le,{}',0,1)$ be a bounded orthogonal saturated poset. Then {\rm(A1)} is equivalent to any of the following conditions:
\begin{enumerate}[{\rm(i)}]
\item $\Min U(x,y')\approx y'\vee\big(\Min U(x,y')\wedge y\big)$,
\item $\Min U(x,y')\le_2y'\vee\big(\Min U(x,y')\wedge y\big)$ for all $x,y\in P$,
\item $x,y\in P$ and $x'\le y$ imply $y=x'\vee(y\wedge x)$.
\end{enumerate}
Moreover, {\rm(A2)} is equivalent to any of the following conditions:
\begin{enumerate}
\item[{\rm(iv)}] $x\wedge\big(\Max L(x,y)\vee x'\big)\approx \Max L(x,y)$,
\item[{\rm(v)}] $x\wedge\big(\Max L(x,y)\vee x'\big)\le_1\Max L(x,y)$ for all $x,y\in P$,
\item[{\rm(vi)}] $x,y\in P$ and $x\le y$ imply $x=(y'\vee x)\wedge y$.
\end{enumerate}
\end{theorem}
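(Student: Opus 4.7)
The plan is to prove two independent cycles of equivalences. For the first, I would run (A1) $\Rightarrow$ (iii) $\Rightarrow$ (i) $\Rightarrow$ (ii) $\Rightarrow$ (A1); for the second, dually, (A2) $\Rightarrow$ (vi) $\Rightarrow$ (iv) $\Rightarrow$ (v) $\Rightarrow$ (A2). The bridges (i) $\Rightarrow$ (ii) and (iv) $\Rightarrow$ (v) are immediate because a set identity automatically gives both $\le_1$ and $\le_2$ relations.

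For (A1) $\Rightarrow$ (iii) I assume $x'\le y$ and observe that $\Min U(y,x')=\{y\}$ and $\Max L(x,y\wedge x)=\{y\wedge x\}$ collapse the relevant Sasaki terms to singletons: $y\odot x=y\wedge x$ and $x\to(y\wedge x)=\{x'\vee(y\wedge x)\}$. Applying (A1) to the trivially true $y\odot x\le_2 y\wedge x$ yields $y\le x'\vee(y\wedge x)$, and the reverse inequality is automatic since $x'\le y$ and $y\wedge x\le y$. For (iii) $\Rightarrow$ (i) I note that each $u\in\Min U(x,y')$ satisfies $y'\le u$, so (iii) applied with $u$ in the role of $y$ expresses $u$ as $y'\vee(u\wedge y)$, which is exactly what is needed for the set identity.

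The only non-routine step in this half is (ii) $\Rightarrow$ (A1). Given $u\in\Min U(x,y')$ with $u\wedge y\le z$, saturation supplies some $v\in\Max L(y,z)$ with $u\wedge y\le v$, so $y'\vee(u\wedge y)\le y'\vee v$. Now (ii) applied to the element $y'\vee(u\wedge y)$ lying in $y'\vee(\Min U(x,y')\wedge y)$ produces some $u'\in\Min U(x,y')$ with $u'\le y'\vee(u\wedge y)\le y'\vee v$; since $x\le u'$ and $y'\vee v\in y\to z$, this gives $x\le_1 y\to z$. The dual cycle is entirely analogous. For (A2) $\Rightarrow$ (vi), $x\le y$ forces $\Min U(x,y')=\{y'\vee x\}$ (the join exists by orthogonality) and $\Max L(y,x)=\{x\}$, reducing both Sasaki terms to singletons, so that (A2) applied to $x\le_1 y\to x=\{y'\vee x\}$ gives $(y'\vee x)\wedge y\le x$, with the reverse inequality automatic. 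For (vi) $\Rightarrow$ (iv) I apply (vi) pointwise to each $u\in\Max L(x,y)$. For (v) $\Rightarrow$ (A2), starting from $x\le y'\vee v_0$ with $v_0\in\Max L(y,z)$, saturation gives $u\in\Min U(x,y')$ with $u\le y'\vee v_0$, whence $u\wedge y\le y\wedge(v_0\vee y')$; (v) then dominates the right-hand side by some $v_1\in\Max L(y,z)\le z$, yielding $x\odot y\le_2 z$.

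The main obstacle I expect is the bookkeeping in (ii) $\Rightarrow$ (A1) and in its dual (v) $\Rightarrow$ (A2): one must carefully invoke saturation to extract the right minimal upper bound or maximal lower bound, and keep track of which joins and meets are actually defined. Definedness is never a genuine problem, since in every case orthogonality applies (one argument always lies below or above the complement of the other), but I would verify it explicitly at each step to keep the argument clean.
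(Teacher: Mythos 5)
Your proposal is correct and follows essentially the same route as the paper: the same two cyclic chains of implications (merely entered at a different point), the same saturation-plus-orthogonality argument for (ii) $\Rightarrow$ (A1) and (v) $\Rightarrow$ (A2), and the same singleton-collapse computations for the remaining implications. The only (inessential) deviation is in (A2) $\Rightarrow$ (vi), where you apply (A2) to $x\le_1 y\to x$ rather than to $y'\vee x\le_1 y\to x$ as the paper does; both yield $(y'\vee x)\wedge y\le x$.
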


\begin{proof}
Let $a,b,c\in P$. \\
(i) $\Rightarrow$ (ii): This is clear. \\
(ii) $\Rightarrow$ (A1): \\
Suppose $a\odot b\le_2c$. Then there exists some $d\in\Min U(a,b')\wedge b$ with $d\le c$. Now $d\in L(b,c)$. Since $\mathbf P$ is saturated there exists some $e\in\Max L(b,c)$ with $d\le e$. Because of orthogonality of $\mathbf P$ the joins $b'\vee d$ and $b'\vee e$ are defined. Now $b'\vee d\in b'\vee\big(\Min U(a,b')\wedge b\big)$. From (ii) we conclude that there exists some $f\in\Min U(a,b')$ with $f\le b'\vee d$. Altogether we obtain
\[
a\le f\le b'\vee d\le b'\vee e\in b'\vee\Max L(b,c)=b\to c
\]
and hence $a\le_1b\to c$. \\
(A1) $\Rightarrow$ (iii): \\
Suppose $a'\le b$. Then $b\odot a=\Min U(b,a')\wedge a=b\wedge a$ and hence $b\odot a\le_2b\wedge a$. According to (A1) we obtain
\[
b\le_1a\to(b\wedge a)=a'\vee\Max L(a,b\wedge a)=a'\vee(b\wedge a)
\]
and hence $b\le a'\vee(b\wedge a)$ which is equivalent to $b=a'\vee(b\wedge a)$. \\
(iii) $\Rightarrow$ (i): \\
This follows from $y'\le z$ for all $z\in\Min U(x,y')$. \\
(iv) $\Rightarrow$ (v): \\
This is clear. \\
(v) $\Rightarrow$ (A2): \\
Suppose $a\le_1b\to c$. Then there exists some $d\in b'\vee\Max L(b,c)$ with $a\le d$. Now $d\in U(a,b')$. Since $\mathbf P$ is saturated there exists some $e\in\Min U(a,b')$ with $e\le d$. Because of orthogonality of $\mathbf P$ the meets $b\wedge d$ and $b\wedge e$ are defined. Now $b\wedge d\in b\wedge\big(\Max L(b,c)\vee b'\big)$. From (v) we conclude that there exists some $f\in\Max L(b,c)$ with $b\wedge d\le f$. Altogether we obtain
\[
a\odot b=\Min U(a,b')\wedge b\ni e\wedge b\le b\wedge d\le f\le c
\]
and hence $a\odot b\le_2c$. \\
(A2) $\Rightarrow$ (vi): \\
Suppose $a\le b$. Then $b\to a=b'\vee\Max L(b,a)=b'\vee a$ and hence $b'\vee a\le_1b\to a$. According to (A2) we obtain
\[
(b'\vee a)\wedge b=\Min U(b'\vee a,b')\wedge b=(b'\vee a)\odot b\le_2a
\]
and hence $(b'\vee a)\wedge b\le a$ which is equivalent to $(b'\vee a)\wedge b=a$. \\
(vi) $\Rightarrow$ (iv): \\
This follows from $z\le x$ for all $z\in\Max L(x,y)$.
\end{proof}

\begin{corollary}\label{cor1}
The generalized Sasaki operations form an adjoint pair if and only if $\mathbf P$ satisfies the identities {\rm(i)} and {\rm(iv)} which is further equivalent to the fact that conditions {\rm(iii)} and {\rm(vi)} hold. Since every orthomodular poset is a bounded orthogonal poset satisfying {\rm(iii)} and {\rm(vi)} we see that in a saturated orthomodular poset the generalized Sasaki operations form an adjoint pair. Observe that {\rm(iii)} and {\rm(vi)} contain only two variables.
\end{corollary}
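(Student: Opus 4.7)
The plan is to derive all three claims of the corollary as immediate consequences of Theorem~\ref{th1} together with the alternative forms of orthomodularity recorded at the opening of Section~2; no new combinatorial argument is needed.

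First I would handle the main equivalence. By definition, the generalized Sasaki operations form an adjoint pair precisely when both (A1) and (A2) hold, so this assertion follows by applying Theorem~\ref{th1} coordinate-wise: (A1) is equivalent to (i) and, in turn, to (iii), while (A2) is equivalent to (iv) and, in turn, to (vi). Combining the two chains gives ``adjoint pair $\iff$ (i) and (iv) $\iff$ (iii) and (vi)''.

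Next I would verify that every orthomodular poset satisfies (iii) and (vi), whence the second assertion follows via the equivalence just established (and the fact, already recorded in Section~2, that every orthomodular poset is orthogonal). The opening of Section~2 lists two reformulations of Definition~\ref{def1}(ii): ``if $a\le b$ then $b = a\vee(b\wedge a')$'' and ``if $a\le b$ then $a = b\wedge(a\vee b')$''. The second is literally (vi) (after renaming $a,b$ to $x,y$ and using commutativity of $\vee$). For (iii), I would substitute $a := x'$ and $b := y$ in the first reformulation; by involutivity $a' = x$, and the conclusion becomes $y = x'\vee(y\wedge x)$, matching (iii). The closing remark that (iii) and (vi) involve only the two variables $x,y$ is a matter of inspection. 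The only potential pitfall in the whole argument is keeping the involution bookkeeping straight when matching (iii) against its orthomodular reformulation; every other step is a direct quotation of Theorem~\ref{th1} or of material explicitly stated in Section~2.
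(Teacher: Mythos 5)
Your proposal is correct and follows the same route the paper intends: the corollary is a direct assembly of Theorem~\ref{th1} (adjoint pair $=$ (A1) and (A2) $\Leftrightarrow$ (i) and (iv) $\Leftrightarrow$ (iii) and (vi)) together with the two reformulations of orthomodularity from the start of Section~2, with the substitution $x\mapsto x'$ and the involution law giving (iii) exactly as you describe. No gaps; the bookkeeping with the involution is the only nontrivial point and you handle it correctly.
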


However, the class of saturated orthomodular posets is not the only important class of complemented posets where generalized Sasaki operations form an adjoint pair.

Recall that a poset $(P,\le)$ is called {\em modular} if for all $x,y,z\in P$ with $x\le z$ we have $L\big(U(x,y),z\big)=LU\big(x,L(y,z)\big)$ or, equivalently, $U\big(x,L(y,z)\big)=UL\big(U(x,y),z\big)$.

\begin{corollary}
Let $(P,\le,{}',0,1)$ be a complemented orthogonal modular poset. Then it satisfies {\rm(iii)} and {\rm(vi)} of Theorem~\ref{th1}. Hence, in every saturated complemented orthogonal modular poset the generalized Sasaki operations form an adjoint pair.
\end{corollary}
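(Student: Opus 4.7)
Since Corollary~\ref{cor1} reduces the adjointness claim to verifying the two-variable conditions \textrm{(iii)} and \textrm{(vi)} of Theorem~\ref{th1}, my plan is to derive those two identities directly from modularity. The strategy in each case is uniform: use the set-theoretic form of the modular law, collapse one side to a principal up- or down-set by means of the complementation law $x\vee x'=1$ or $x\wedge x'=0$, and use orthogonality to replace a two-element lower or upper set by a principal one.

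For \textrm{(iii)}, assume $x'\le y$. I will apply the equivalent modular identity $U(a,L(b,c))=UL(U(a,b),c)$, valid for $a\le c$, with $a:=x'$, $b:=x$, $c:=y$. On the right-hand side, $x\vee x'=1$ gives $U(x',x)=\{1\}$, whence $UL(U(x',x),y)=UL(y)=U(y)$. On the left-hand side, orthogonality applied to $x'\le y$ ensures that $x\wedge y$ exists, so $L(x,y)=L(x\wedge y)$ and therefore $U(x',L(x,y))=U(x',x\wedge y)$. The identity then reads $U(x',x\wedge y)=U(y)$; taking minima forces $\Min U(x',x\wedge y)=\{y\}$, i.e.\ $x'\vee(x\wedge y)=y$, which is \textrm{(iii)}.

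The proof of \textrm{(vi)} is formally dual. Given $x\le y$, I apply $L(U(a,b),c)=LU(a,L(b,c))$ with $a:=x$, $b:=y'$, $c:=y$; on the right, $y\wedge y'=0$ forces $L(y',y)=\{0\}$, collapsing the expression to $LU(x)=L(x)$. On the left, orthogonality applied to $x\le y$ provides $x\vee y'$, so $U(x,y')=U(x\vee y')$ and $L(U(x,y'),y)=L(x\vee y',y)$. The identity $L(x\vee y',y)=L(x)$ now yields, by taking maxima, $\Max L(x\vee y',y)=\{x\}$, that is $(y'\vee x)\wedge y=x$, which is \textrm{(vi)}.

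I do not expect a serious obstacle here: the only delicate points are keeping the notations $L(\cdot,\cdot)$, $L(\cdot)$, $LU$, $UL$ straight, and being sure that in each reduction orthogonality is exactly what licenses the passage from $L(x,y)$ to $L(x\wedge y)$, respectively from $U(x,y')$ to $U(x\vee y')$. Note that saturation plays no role in establishing \textrm{(iii)} and \textrm{(vi)} themselves; it is invoked only at the very last step, through Corollary~\ref{cor1}, to convert these identities into the adjointness of $\odot$ and $\to$.
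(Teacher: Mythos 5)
Your proof is correct and follows essentially the same route as the paper: the same substitutions into the set-theoretic modular law, the same collapses via $x\vee x'=1$ and $y\wedge y'=0$, and the same use of orthogonality to replace $L(x,y)$ by $L(x\wedge y)$ and $U(x,y')$ by $U(x\vee y')$. The only (harmless) cosmetic difference is that you read off the existence of the final join/meet directly from the equalities $U(x',x\wedge y)=U(y)$ and $L(x\vee y',y)=L(x)$, whereas the paper writes the last step as $U\big(x'\vee(y\wedge x)\big)$ resp.\ $L\big((y'\vee x)\wedge y\big)$.
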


\begin{proof}
Let $a,b\in P$. If $a'\le b$ then
\[
U(b)=UL(b)=UL(1,b)=UL\big(U(a',a),b\big)=U\big(a',L(a,b)\big)=U(a',a\wedge b)=U\big(a'\vee(b\wedge a)\big)
\]
and hence $b=a'\vee(b\wedge a)$. If $a\le b$ then
\[
L(a)=LU(a)=LU(a,0)=LU\big(a,L(b',b)\big)=L\big(U(a,b'),b\big)=L(a\vee b',b)=L\big((b'\vee a)\wedge b\big)
\]
and hence $a=(b'\vee a)\wedge b$.	
\end{proof}

It is worth noticing that every distributive poset is modular and hence also in every saturated orthogonal Boolean poset the generalized Sasaki operations form an adjoint pair.

\begin{example}
In the poset of Example~\ref{ex1} the generalized Sasaki operations do not form an adjoint pair since $1\le_11=c\to a$, but $1\odot c=c\not\le_2a$.
\end{example}

\begin{example}
Let $\mathbf M_3=(M_3,\vee,\wedge)$ denote the lattice visualized in Figure~2:

\vspace*{-3mm}

\begin{center}
\setlength{\unitlength}{7mm}
\begin{picture}(6,6)
\put(3,1){\circle*{.3}}
\put(1,3){\circle*{.3}}
\put(3,3){\circle*{.3}}
\put(5,3){\circle*{.3}}
\put(3,5){\circle*{.3}}
\put(3,1){\line(-1,1)2}
\put(3,1){\line(0,1)4}
\put(3,1){\line(1,1)2}
\put(3,5){\line(-1,-1)2}
\put(3,5){\line(1,-1)2}
\put(2.85,.3){$0$}
\put(.35,2.85){$a$}
\put(3.4,2.85){$b$}
\put(5.4,2.85){$c$}
\put(2.85,5.4){$1$}
\put(2.2,-.75){{\rm Fig.~2}}
\put(.4,-1.75){{\rm Modular lattice $\mathbf M_3$}}
\end{picture}
\end{center}

\vspace*{10mm}

Define a unary operation $\,'$ on $M_3$ by the following table:
\[
\begin{array}{l|ccccc}
x  & 0 & a & b & c & 1 \\
\hline
x' & 0 & b & c & a & 1
\end{array}
\]
Then $\,'$ is a complementation on $\mathbf M_3$ and $\mathbf P:=(M_3,\le,{}',0,1)$ is an orthogonal saturated poset that is not an orthomodular poset since $\,'$ is not an involution. Let $x,y\in M_3$. First assume $x'\le y$. \\
If $x'=0$ then $x'\vee(y\wedge x)=0\vee(y\wedge1)=y$, \\
if $y=1$ then $x'\vee(y\wedge x)=x'\vee(1\wedge x)=x'\vee x=1=y$, \\
if $x'=y$ then $x'\vee(y\wedge x)=y\vee(y\vee x)=y$. \\
Hence $\mathbf P$ satisfies {\rm(iii)} of Theorem~\ref{th1}. Now assume $x\le y$. \\
If $x=0$ then $(y'\vee x)\wedge y=(y'\vee0)\wedge y=y'\wedge y=0=x$, \\
if $y=1$ then $(y'\vee x)\wedge y=(0\vee x)\wedge1=x$, \\
if $x=y$ then $(y'\vee x)\wedge y=(x'\vee x)\wedge x=x$. \\
This shows that $\mathbf P$ satisfies {\rm(vi)} of Theorem~\ref{th1}. According to Corollary~\ref{cor1} the generalized Sasaki operations form an adjoint pair.
\end{example}

\begin{example}
Let $\mathbf L$ denote the orthomodular lattice depicted in Figure~3:

\vspace*{-3mm}

\begin{center}
\setlength{\unitlength}{7mm}
\begin{picture}(12,8)
\put(6,1){\circle*{.3}}
\put(1,3){\circle*{.3}}
\put(3,3){\circle*{.3}}
\put(5,3){\circle*{.3}}
\put(7,3){\circle*{.3}}
\put(9,3){\circle*{.3}}
\put(11,3){\circle*{.3}}
\put(1,5){\circle*{.3}}
\put(3,5){\circle*{.3}}
\put(5,5){\circle*{.3}}
\put(7,5){\circle*{.3}}
\put(9,5){\circle*{.3}}
\put(11,5){\circle*{.3}}
\put(6,7){\circle*{.3}}
\put(6,1){\line(-5,2)5}
\put(6,1){\line(-3,2)3}
\put(6,1){\line(-1,2)1}
\put(6,1){\line(1,2)1}
\put(6,1){\line(3,2)3}
\put(6,1){\line(5,2)5}
\put(6,7){\line(-5,-2)5}
\put(6,7){\line(-3,-2)3}
\put(6,7){\line(-1,-2)1}
\put(6,7){\line(1,-2)1}
\put(6,7){\line(3,-2)3}
\put(6,7){\line(5,-2)5}
\put(1,3){\line(0,1)2}
\put(1,3){\line(1,1)2}
\put(3,3){\line(-1,1)2}
\put(3,3){\line(1,1)2}
\put(5,3){\line(-1,1)2}
\put(5,3){\line(0,1)2}
\put(7,3){\line(0,1)2}
\put(7,3){\line(1,1)2}
\put(9,3){\line(-1,1)2}
\put(9,3){\line(1,1)2}
\put(11,3){\line(-1,1)2}
\put(11,3){\line(0,1)2}
\put(5.85,.3){$0$}
\put(.35,2.85){$a$}
\put(2.35,2.85){$b$}
\put(4.35,2.85){$c$}
\put(7.4,2.85){$d$}
\put(9.4,2.85){$e$}
\put(11.4,2.85){$f$}
\put(.35,4.85){$c'$}
\put(2.35,4.85){$b'$}
\put(4.35,4.85){$a'$}
\put(7.4,4.85){$f'$}
\put(9.4,4.85){$e'$}
\put(11.4,4.85){$d'$}
\put(5.85,7.4){$1$}
\put(5.2,-.75){{\rm Fig.~3}}
\put(3,-1.75){{\rm Orthomodular lattice $\mathbf L$}}
\end{picture}
\end{center}

\vspace*{10mm}

According to Corollary~\ref{cor1} the generalized Sasaki operations on $\mathbf L$ form an adjoint pair and $\mathbf L$ contains a sublattice of the form $\mathbf O_6$ visualized in Figure~4:

\vspace*{-3mm}

\begin{center}
\setlength{\unitlength}{7mm}
\begin{picture}(4,8)
\put(2,1){\circle*{.3}}
\put(1,3){\circle*{.3}}
\put(3,3){\circle*{.3}}
\put(1,5){\circle*{.3}}
\put(3,5){\circle*{.3}}
\put(2,7){\circle*{.3}}
\put(2,1){\line(-1,2)1}
\put(2,1){\line(1,2)1}
\put(1,3){\line(0,1)2}
\put(3,3){\line(0,1)2}
\put(2,7){\line(-1,-2)1}
\put(2,7){\line(1,-2)1}
\put(1.85,.3){$0$}
\put(.35,2.85){$c$}
\put(.35,4.85){$a'$}
\put(3.4,2.85){$d$}
\put(3.4,4.85){$f'$}
\put(1.85,7.4){$1$}
\put(1.2,-.75){{\rm Fig.~4}}
\put(.3,-1.75){{\rm Sublattice of $\mathbf L$}}
\end{picture}
\end{center}

\vspace*{10mm}

Observe that this sublattice is not a subalgebra of $\mathbf L$ since it is not closed under the complementation. On the other hand, the following holds: If a complemented lattice contains a subalgebra of the form $\mathbf O_6$ depicted in Figure~5:

\vspace*{-3mm}

\begin{center}
\setlength{\unitlength}{7mm}
\begin{picture}(4,8)
\put(2,1){\circle*{.3}}
\put(1,3){\circle*{.3}}
\put(3,3){\circle*{.3}}
\put(1,5){\circle*{.3}}
\put(3,5){\circle*{.3}}
\put(2,7){\circle*{.3}}
\put(2,1){\line(-1,2)1}
\put(2,1){\line(1,2)1}
\put(1,3){\line(0,1)2}
\put(3,3){\line(0,1)2}
\put(2,7){\line(-1,-2)1}
\put(2,7){\line(1,-2)1}
\put(1.85,.3){$0$}
\put(.35,2.85){$x$}
\put(.35,4.85){$z$}
\put(3.4,2.85){$y$}
\put(3.4,4.85){$u$}
\put(1.85,7.4){$1$}
\put(1.2,-.75){{\rm Fig.~5}}
\put(-2.4,-1.75){{\rm Subalgebra of a complemented lattice}}
\end{picture}
\end{center}

\vspace*{10mm}

then the generalized Sasaki operations do not form an adjoint pair. On the contrary, assume that they form an adjoint pair. Then because of $z'\in\{y,u\}$ we have
\[
1\le_11=z'\vee x=z'\vee\Max L(z,x)=z\to x
\]
and hence
\[
z=1\wedge z=\Min U(1,z')\wedge z=1\odot z\le_2x,
\]
a contradiction.
\end{example}








Authors' addresses:

Ivan Chajda \\
Palack\'y University Olomouc \\
Faculty of Science \\
Department of Algebra and Geometry \\
17.\ listopadu 12 \\
771 46 Olomouc \\
Czech Republic \\
ivan.chajda@upol.cz

Helmut L\"anger \\
TU Wien \\
Faculty of Mathematics and Geoinformation \\
Institute of Discrete Mathematics and Geometry \\
Wiedner Hauptstra\ss e 8-10 \\
1040 Vienna \\
Austria, and \\
Palack\'y University Olomouc \\
Faculty of Science \\
Department of Algebra and Geometry \\
17.\ listopadu 12 \\
771 46 Olomouc \\
Czech Republic \\
helmut.laenger@tuwien.ac.at
\end{document}